\newcommand{\Q}{\mathbb{Q}}
\newcommand{\Z}{\mathbb{Z}}
\newcommand{\Gal}{\operatorname{Gal}}
\newcommand{\GL}{\operatorname{GL}}
\newcommand{\SL}{\operatorname{SL}}
\newcommand{\Aut}{\operatorname{Aut}}
\newcommand{\oQ}{\overline{\Q}}
\newtheorem{theorem}{Theorem}
\newtheorem{proposition}[theorem]{Proposition}
\newtheorem{corollary}[theorem]{Corollary}
\newcommand{\cI}{\mathcal{I}}
\newcommand{\tors}{\textrm{tors}}
\newcommand{\CM}{\textrm{CM}}
\newtheorem*{conjecture*}{Conjecture}
\newcommand{\cF}{\mathcal{F}}
\theoremstyle{definition}
\title{Uniform Polynomial Bounds on Torsion From Rational Geometric Isogeny Classes}
\author{Abbey Bourdon}
\address{Wake Forest University, Winston-Salem, NC 27104, USA}
\email{bourdoam@wfu.edu}
\author{Tyler Genao}
\address{The Ohio State University, Columbus, OH 43210, USA}
\email{genao.5@osu.edu}
\begin{document}
\begin{abstract}
In 1996, Merel \cite{Mer96} showed there exists a function $B\colon \Z^+\rightarrow \Z^+$ such that for any elliptic curve $E/F$ defined over a number field of degree $d$, one has the torsion group bound $\# E(F)[\textrm{tors}]\leq B(d)$.  Based on subsequent work, it is conjectured that one can choose $B$ to be \textit{polynomial} in the degree $d$. In this paper, we show that such bounds exist for torsion from the family $\cI_\Q$ of elliptic curves which are geometrically isogenous to at least one rational elliptic curve. More precisely, we show that for each $\epsilon>0$, there exists $c_\epsilon>0$ such that for any elliptic curve $E/F\in \cI_\Q$, 
one has
\[
\#  E(F)[\tors]\leq c_\epsilon\cdot [F:\Q]^{3+\epsilon}.
\]
This generalizes work of the second author \cite{Gen24} for elliptic curves within a fixed rational geometric isogeny class. For the family of elliptic curves with rational $j$-invariant, we also obtain bounds which improve those of Clark and Pollack \cite{CP18}. In this case, our bounds on the exponent of $E(F)[\tors]$ are optimal if one does not exclude elliptic curves with complex multiplication.
\end{abstract}
\maketitle

\section{Introduction}
For an elliptic curve $E$ defined over a number field $F$, a classic theorem of Mordell and Weil shows that the group $E(F)$ of $F$-rational points on $E$ is a finitely generated abelian group. In particular, its torsion group $E(F)[\tors]$, which is the subgroup of points of finite order, is a finite abelian group. 
It has been of great interest within the past half-century to understand $E(F)[\tors]$ \textit{uniformly,} i.e., for varying elliptic curves $E/F$. 
In 1996, Merel \cite{Mer96} proved his celebrated ``strong uniformity" theorem: for each integer $d>0$, there exists a constant $B(d)>0$ such that for all elliptic curves $E/F$ where $[F:\Q]=d$, one has $\#E(F)[\tors]\leq B(d)$. 
Merel's work provided explicit bounds on primes $\ell\mid \# E(F)[\tors]$ in terms of $d$, which Parent \cite{Par99} later improved by showing that if $\ell^n\mid \# E(F)[\tors]$, then $\ell^n\leq 129(5^d-1)(3d)^6$.

By the work above, one can construct an explicit bound function $B\colon \Z^+\rightarrow \Z^+$ such that for any number field $F$ of degree $d$ and for any elliptic curve $E/F$, one has $\# E(F)[\tors]\leq B(d)$. However, even by Parent's work, such a function would be more than exponential in the degree $d$. Thus, the next natural question is the following: \textit{which choice of function $B$ is close to the ``truth" of torsion growth?} One knows the truth for the family of elliptic curves with \textit{complex multiplication} (CM): by work of Clark and Pollack \cite{CP15}, there exists an absolute, effectively computable constant $c_{\CM}\in\Z^+$, such that for all number fields $F/\Q$ of degree $d\geq 3$ and for all CM elliptic curves $E/F$, one has $\# E(F)[\tors]\leq c_{\CM}\cdot d\log\log d$. This is best-possible by work of Breuer \cite{Bre10}.

Despite our current understanding of torsion groups of CM elliptic curves, the non-CM case remains much more mysterious. Though a number of conjectures and questions appear in the literature concerning the truth of $B$ in the non-CM case (see, for example, \cite[$\S3$]{HS99}), one of the least ambitious forms of a general conjecture involves bounds which are \textit{polynomial} in the degree. 
\begin{conjecture*}
There exist absolute constants $b,c>0$ such that for any number field $F$ of degree $d$ and for any elliptic curve $E/F$, one has $\# E(F)[\emph{tors}]\leq c\cdot d^{b}$.
\end{conjecture*}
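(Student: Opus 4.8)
The plan is to reduce the Conjecture to a single polynomial-in-$d$ bound on the index of the adelic Galois image of a non-CM elliptic curve, and then to analyze that index; the one genuinely open ingredient will be a degree-uniform, polynomial form of Mazur's isogeny theorem. \textbf{Step 1 (reduction to the adelic index).} Let $E/F$ be an elliptic curve with $d=[F:\Q]$; by the Clark--Pollack theorem quoted above we may assume $E$ is non-CM (for $d\le 2$ the torsion is already bounded by Mazur and Kamienny). Let $N$ be the exponent of $E(F)[\tors]$, so $E(F)[\tors]\cong \Z/a\Z\oplus\Z/N\Z$ with $a\mid N$ and $\#E(F)[\tors]=aN\le N^2$; fix $P\in E(F)$ of order $N$, corresponding to a primitive vector $v\in(\Z/N\Z)^2$. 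Since $P$ is $F$-rational, the image of $\rho_{E,N}\colon G_F\to\GL_2(\Z/N\Z)$ lies in the stabilizer of $v$, whose index in $\GL_2(\Z/N\Z)$ equals the number of primitive vectors, namely $N^2\prod_{\ell\mid N}(1-\ell^{-2})\ge \tfrac{6}{\pi^2}N^2$. As this index is at most $I(E/F):=[\GL_2(\widehat{\Z}):\rho_E(G_F)]$ (finite, by Serre's open image theorem), one gets the clean unconditional bound
\[
\#E(F)[\tors]\ \le\ N^2\ \le\ \tfrac{\pi^2}{6}\,I(E/F).
\]
Thus the Conjecture would follow from one statement: there are absolute $c,B>0$ with $I(E/F)\le c\,d^{B}$ for every non-CM $E/F$ of degree $d$.

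\textbf{The main obstacle: controlling the bad primes.} Write $I(E/F)=C(E/F)\cdot\prod_\ell I_\ell(E/F)$, the product of the adelic entanglement index with the $\ell$-adic indices $I_\ell:=[\GL_2(\Z_\ell):\rho_{E,\ell^\infty}(G_F)]$. For $\ell\ge 5$ one has $I_\ell>1$ exactly when $\rho_{E,\ell}(G_F)$ is non-surjective (Serre), and by Dickson's classification this forces the image into a Borel subgroup or the normalizer of a (split or non-split) Cartan, producing a non-CM point of degree $\le d$ on $X_0(\ell)$, $X_{\mathrm{sp}}^+(\ell)$, or $X_{\mathrm{ns}}^+(\ell)$. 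The crux is to bound, polynomially in $d$, both the largest such prime and the \emph{cumulative} contribution $\prod_{\ell\ \mathrm{bad}}I_\ell$ — the latter requiring control on the number of bad primes, so that the product is genuinely polynomial rather than merely quasi-polynomial in $d$. Abramovich's theorem gives $\mathbb{C}$-gonality $\gg\ell$ for $X_0(\ell)$ (and $\gg\ell^2$ for $X_1(\ell)$), which together with Frey's theorem eliminates all but finitely many \emph{sporadic} points of degree $\le d$ once $\ell$ exceeds a fixed power of $d$; uniform polynomial control of these sporadic points is exactly what is missing. It would follow from an effective Caporaso--Harris--Mazur uniformity statement, or from a quantitative Frey--Mazur bound on isogenies, or from a version of Mazur's formal-immersion method (Kamienny, Momose, Parent, Bilu--Parent--Rebolledo, Derickx--Kamienny--Stein--Stoll) made uniform across all degrees $d$ with only polynomial loss — whereas the best unconditional bounds (Merel--Oesterlé, and Parent's $p^n\le 129(5^d-1)(3d)^6$) are exponential in $d$.

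\textbf{The remaining factors, and status.} Granting polynomial control of the bad primes, the other contributions to $I(E/F)$ are comparatively routine. The $\ell$-adic depth — the least $k$ with $\rho_{E,\ell^\infty}(G_F)$ the full preimage of its reduction mod $\ell^k$ — is bounded, via Serre's methods together with the explicit $\ell$-adic image classifications of Zywina, Sutherland, and Rouse--Sutherland--Zywina, uniformly enough in $d$ to contribute only polynomially to each $I_\ell$; the contributions at $\ell=2,3$ are absolutely bounded; and the entanglement index $C(E/F)$, measuring correlations between the $\rho_{E,\ell^\infty}$ at distinct primes, is likewise polynomially bounded, being supported on the bad primes together with a bounded collection of small entanglements. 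Assembling these with the previous step gives $I(E/F)\ll d^{O(1)}$, hence the Conjecture. In summary, the whole difficulty is concentrated in the second step: it is precisely there that the present paper retreats to the family $\cI_\Q$, where rationality of the geometric isogeny class supplies the missing uniformity, and invokes Clark--Pollack in the CM case. An unconditional proof of the full Conjecture appears to require either a genuine uniformity theorem for rational points on curves of bounded genus and degree, or a substantially strengthened, degree-uniform form of the formal-immersion machinery.
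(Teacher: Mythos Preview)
The statement you are addressing is presented in the paper as an open \emph{Conjecture}, not as a theorem; the paper offers no proof of it and instead establishes the special case of Theorem~\ref{Thm_UniformPolynomialBoundsOnRationalGeometricIsogenyClasses} for the family $\cI_\Q$. There is therefore no ``paper's own proof'' to compare against, and your proposal is, correctly and candidly, not a proof either: you yourself identify the missing ingredient (uniform polynomial control of the bad primes, equivalently a degree-uniform polynomial Frey--Mazur or formal-immersion statement) and observe that it is not currently available.

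Two remarks on the technical content. First, your Step~1 reduction is sound but strictly \emph{strengthens} the target: a polynomial bound on the adelic index $I(E/F)$ in terms of $d$ implies the Conjecture, but it encodes far more (full control of isogenies and entanglements), so you may be trading one open problem for a harder one rather than isolating the essential difficulty. Second, in the Dickson step you omit the exceptional images (projective $A_4,S_4,A_5$); this is harmless for large $\ell$ but should be mentioned. Your diagnosis of the obstruction is otherwise accurate and matches the paper's own framing: the paper succeeds precisely by restricting to $\cI_\Q$, where rationality of a curve in the isogeny class supplies exactly the uniform control (Theorem~\ref{ThmLargePrimeImage}, Propositions~\ref{PropScalars} and~\ref{PropDegModCurve}) that is unavailable in general.
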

That such bounds might exist was first suggested in a question of Flexor and Oesterl\'{e} \cite[Remarque 3]{FO90} for elliptic curves with everywhere good reduction, and the result was later established in this case by Hindry and Silverman \cite{HS99}. More recent work of Clark and Pollack \cite[Theorem 1.3]{CP18} implies there exists polynomial bounds on torsion from the family $\cF_\Q$ of elliptic curves with rational $j$-invariant: for any $\epsilon>0$, there exists a constant $c_\epsilon>0$ such that for all elliptic curves $E/F\in \cF_\Q$, one has $\# E(F)[\tors]\leq c_\epsilon\cdot [F:\Q]^{5/2+\epsilon}$. They also give polynomial bounds in several other cases, including for $E/F$ with $j(E)$ lying in a fixed quadratic field that is not imaginary quadratic of class number one; see \cite[Theorem 1.5, 1.6, 1.7]{CP18}.

In this paper, we extend polynomial bounds from $\cF_\Q$ to a significantly larger family of elliptic curves. For two elliptic curves $E_1/F_1$ and $E_2/F_2$, a \textit{geometric isogeny} from $E_1$ to $E_2$ is a non-constant $\oQ$-rational morphism $\phi\colon E_1/\oQ\rightarrow E_2/\oQ$ which fixes base points. For an elliptic curve $E/F$, the collection of all elliptic curves geometrically isogenous to $E$ is called its \textit{geometric isogeny class.} Let us define the family
\[
\cI_\Q:=\{ E/F~:~\exists E'/\Q\textrm{ for which }E\textrm{ is geometrically isogenous to }E'\}.
\]
The family $\cI_\Q$ is essentially the union over all \textit{rational} geometric isogeny classes, which are geometric isogeny classes which contain least one rational elliptic curve, and it has been studied in several previous works \cite{CN21,BourdonNajman2021,Gen23,BourdonRyallsWatson23}.
We see that $\cF_\Q\subseteq \cI_\Q$, and this is a strict containment since $\cI_\Q$ contains elliptic curves whose $j$-invariant has arbitrarily large degree over $\Q$. 

Our first main result is the following, which generalizes \cite[Theorem 1]{Gen24} in the case of rational geometric isogeny classes. Here, we use ``exp" to denote the exponent of a finite commutative group.
\begin{theorem}\label{Thm_UniformPolynomialBoundsOnRationalGeometricIsogenyClasses}
Torsion from $\cI_\Q$ is polynomially bounded. More precisely, for each $\epsilon>0$, there exists $c_\epsilon>0$ such that for any elliptic curve $E/F\in \cI_\Q$, one has
\[
\exp E(F)[\emph{tors}]\leq c_\epsilon\cdot [F:\Q]^{2+\epsilon},
\]
as well as
\[
\#  E(F)[\emph{tors}]\leq c_\epsilon\cdot [F:\Q]^{3+\epsilon}.
\]
\end{theorem}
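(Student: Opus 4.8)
The plan is to transfer the problem, along a suitable isogeny, to the already-understood family $\cF_\Q$, after first separating the exponent and rank contributions to the torsion. Let $E'/\Q$ be a rational elliptic curve geometrically isogenous to $E$; such a curve exists by definition of $\cI_\Q$.

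\emph{Reductions.} If $E$ has complex multiplication then so does $E'$, and the conclusion is immediate from the quasi-linear Clark--Pollack bound $\#E(F)[\tors]\ll [F:\Q]\log\log[F:\Q]$ for CM elliptic curves \cite{CP15} (the small-degree cases being covered by classical absolute bounds); so assume henceforth that $E$ is non-CM. Write $E(F)[\tors]\cong \Z/a\Z\times \Z/b\Z$ with $a\mid b$, so that $b=\exp E(F)[\tors]$ and $\#E(F)[\tors]=ab$. Since $E[a]\subseteq E(F)$, the Weil pairing forces $\mu_a\subseteq F$, whence $\varphi(a)\mid [F:\Q]$ and $a\ll_\epsilon [F:\Q]^{1+\epsilon}$. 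It therefore suffices to prove the exponent bound $\exp E(F)[\tors]\ll_\epsilon [F:\Q]^{4+\epsilon}$; the bound on $\#E(F)[\tors]$ then follows with the extra exponent $1$.

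\emph{Rationalizing the isogeny and transferring torsion.} Since $E$ is non-CM, $\operatorname{Hom}_{\oQ}(E,E')$ is free of rank $1$ over $\operatorname{End}_{\oQ}(E)=\Z$, so $\Gal(\oQ/F)$ acts on it through $\Aut(\Z)=\{\pm 1\}$; hence every $\oQ$-isogeny $E\to E'$ is already defined over an extension of $F$ of degree at most $2$. Replacing $F$ by that extension --- which changes the target bounds only by a bounded factor --- and dividing out a multiplication map, we may assume we have a \emph{cyclic} $F$-rational isogeny $\phi\colon E\to E'$ of some degree $N$. As $\#\ker\phi=N$,
\[
\exp E(F)[\tors]\;\le\;N\cdot \exp E'(F)[\tors].
\]
Since $E'\in\cF_\Q$, the results of Clark--Pollack \cite{CP18} bound $\exp E'(F)[\tors]$ polynomially in $[F:\Q]$, so the remaining task is to bound $N$ polynomially in $d:=[F:\Q]$; combining these polynomial bounds then yields the exponent $4$.

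\emph{Bounding the isogeny degree --- the main obstacle.} The dual $\hat\phi\colon E'\to E$ is a cyclic $F$-rational $N$-isogeny with $E'/\Q$, so $\ker\hat\phi\subseteq E'[N]$ is a $\Gal(\oQ/F)$-stable cyclic subgroup of order $N$; equivalently $(E',\ker\hat\phi)$ is a non-cuspidal non-CM point of $X_0(N)$ of degree $\le d$ over $\Q$ lying above the rational point $j(E')\in X(1)(\Q)$. Writing $\rho_{E',N}$ for the mod-$N$ Galois representation of $E'/\Q$, this says $\rho_{E',N}(\Gal(\oQ/F))$ lies in the stabilizer of a cyclic $N$-subgroup, a ``Borel'' subgroup of $\GL_2(\Z/N\Z)$; hence, with $\psi(N)=N\prod_{p\mid N}(1+1/p)$,
\[
N\;\le\;\psi(N)\;\le\;\bigl[\GL_2(\Z/N\Z):\rho_{E',N}(\Gal(\oQ/F))\bigr]\;\le\;d\cdot\bigl[\GL_2(\Z/N\Z):\rho_{E',N}(\Gal(\oQ/\Q))\bigr].
\]
So it suffices to bound the index of the mod-$N$ image of a non-CM $E'/\Q$ polynomially in $N$ --- and the crucial feature is that this is a purely rational question, where Mazur's theorem and the classification of $\ell$-adic (and adelic) Galois images of non-CM elliptic curves over $\Q$ supply \emph{absolute} constants: $\rho_{E',\ell}$ is surjective for all $\ell>37$, and the finitely many exceptional mod-$\ell^\infty$ images have bounded index. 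A last subtlety is that passing through $\phi$ forfeits control of the $N$-primary part of $E(F)[\tors]$; this part is handled separately, using that for $\ell\mid N$ the mod-$\ell^e$ image of $E$ over $F$ differs from that of $E'$ over $F$ only by a cyclic-isogeny factor bounded in terms of $N$, which is already under control. I expect the genuine difficulty to lie precisely here: establishing that a geometric isogeny to a rational curve, though potentially of large degree, has degree polynomially bounded in $[F:\Q]$ --- a phenomenon with no counterpart in the $\cF_\Q$ setting of \cite{CP18}.
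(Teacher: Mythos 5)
Your overall architecture matches the paper's (CM reduction via Clark--Pollack, the Weil-pairing step from exponent to order, descending a geometric isogeny to an at most quadratic extension $K/F$, and transferring torsion to the rational curve $E'$), and the transfer inequality $\exp E(F)[\tors]\le N\cdot\exp E'(K)[\tors]$ is fine. The gap sits exactly where you locate ``the main obstacle'': your mechanism for bounding $N=\deg\phi$ does not rest on known results. You assert that $\rho_{E',\ell}$ is surjective for every non-CM $E'/\Q$ and every $\ell>37$, and that the exceptional $\ell$-adic (or adelic) images over $\Q$ have absolutely bounded index. The first assertion is precisely Serre's uniformity question over $\Q$, which is open (the paper says so in \S 2.1); the best known result, Theorem \ref{ThmLargePrimeImage}, only says the mod-$\ell$ image is $\GL_2(\Z/\ell\Z)$ or $C_{ns}^+(\ell)$, and no uniform bounded-index classification of images over $\Q$ is known. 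Moreover, even granting the known dichotomy, your chain $N\le\psi(N)\le d\cdot\bigl[\GL_2(\Z/N\Z):\rho_{E',N}(G_\Q)\bigr]$ is vacuous in the Cartan case: already for $N=\ell$ prime with $\rho_{E',\ell}(G_\Q)=C_{ns}^+(\ell)$, the index on the right is $\ell(\ell-1)/2$, so the inequality holds for every $\ell$ and places no bound on $\ell$ in terms of $d$. Mazur's isogeny theorem does not help either, since $\ker\hat\phi$ is only $K$-rational, not $\Q$-rational.

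The content needed to bound $N$ --- equivalently, to bound the degree of a closed point of $X_0(N)$ lying over a rational non-CM $j$-invariant --- is exactly the hard part of the paper: Proposition \ref{PropScalars} (scalars in the $\ell$-adic image in the Cartan-normalizer case, via Lombardo--Tronto), Propositions \ref{PropDegModCurve}, \ref{DivLemma2} and \ref{Prop_IsogenyAndTorsionOrderBounds} (degree lower bounds on $X_1(\ell^k)$, $X_0(\ell^k)$, $X_1(n)$ via Smith's ramification results and the absence of a canonical subgroup), and Theorem \ref{Thm_2.8Generalization} (the small-prime part via Clark--Pollack's uniform $\ell$-adic Arai bounds); these culminate in Theorem \ref{Thm_IsogenyAndTorsionOrderBoundsUpToEpsilon}, whose cyclic-subgroup bound $n\ll_\epsilon[\Q(C):\Q]^{2+\epsilon}$ is precisely the estimate your $N$ requires. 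Granted that theorem, your transfer inequality does close the proof (indeed, combined with the paper's point bound $n\ll_\epsilon[\Q(P):\Q]^{1+\epsilon}$ it appears to give the sharper exponent $3+\epsilon$ for the group exponent, whereas the paper's decomposition into small-prime, square-full and square-free parts yields $4+\epsilon$). But as written, your proposal replaces that entire body of work with an appeal to an open conjecture, so the central step is unsupported.
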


Theorem \ref{Thm_UniformPolynomialBoundsOnRationalGeometricIsogenyClasses} relies on a bound for torsion from $\cF_\Q$, which is Theorem \ref{Thm_Rational_j} below. This improves the group exponent bound from \cite[Theorem 1.3]{CP18} by a square-root factor of the degree. By \cite[Theorem 6]{CCS13} and its proof, this gives the optimal power bound on the exponent as we are not excluding CM elliptic curves. That is, for any $\epsilon>0$, we show the bound is of the order of $[F:\Q]^{1+\epsilon}$, and by \cite{CCS13} the 1 cannot be replaced with anything smaller.

\begin{theorem}\label{Thm_Rational_j}
For each $\epsilon>0$, there exists a constant $c_\epsilon>0$ such that for any elliptic curve $E/F$ with $j(E) \in \Q$, we have
\[
\exp E(F)[\emph{tors}]\leq c_\epsilon\cdot [F:\Q]^{1+\epsilon},
\]
as well as
\[
\#  E(F)[\emph{tors}]\leq c_\epsilon\cdot [F:\Q]^{3/2+\epsilon}.
\]
\end{theorem}

Our improved bounds for non-CM elliptic curves in $\cF_\Q$ result from a more detailed analysis of the field of definition of torsion points $P\in E$ whose orders are supported on primes $\ell>37$. Here, if $E'/\Q$ has $j(E')=j(E)$, then the image of its mod-$\ell$ Galois representation is either surjective, or is equal to the normalizer of a non-split Cartan subgroup; see work of Furio and Lombardo \cite{FurioLombardo23}. In the first case, contributions to the degree of $\Q(P)$ are essentially as large as possible by \cite[Proposition 5.7]{BELOV}. Otherwise, both \cite[Theorem 3.11]{FurioLombardo23} and \cite[Appendix B]{LeFournLemos21} imply that $E'$ has potentially good reduction, and no canonical subgroup of order $\ell$. This allows us to make use of more refined ramification data due to Smith \cite{Smith21}.

If one were to restrict to only non-CM elliptic curves, it is expected that our results can be significantly strengthened. For example, Hindry and Silverman \cite[$\S3$]{HS99} ask whether there exists a constant $c$ such that for any non-CM elliptic curve $E$ defined over $F$ of degree $d$, one has $\# E(F)[\tors] \leq c \sqrt{d \log \log d}$. Such a bound on torsion from non-CM elliptic curves with rational $j$-invariant would follow from an affirmative answer to Serre's uniformity question; see \cite[$\S4.2$]{Bre10}.

\section*{Acknowledgments}
We thank Pete L. Clark for his many helpful comments on an earlier draft which significantly improved the exposition of this work. In addition, his suggested strengthening of Theorem \ref{Thm_2.8Generalization} led to an improvement in one of our main results; see $\S4$ for details.  

The first author was partially supported by NSF grant DMS-2145270.

\section{Background and Prior Results}

\subsection{Results on Galois representations}\label{SubSect_GalReps} Once and for all, fix an algebraic closure $\oQ$ of $\Q$. Given an algebraic extension $F/\Q$ and an elliptic curve $E/F$, one has a natural action of the absolute Galois group $G_F:=\Gal(\oQ/F)$ on the group $E(\oQ)$: for example, if $E$ is given in Weierstrass form, then the action on points $P=(x,y)\in E(\oQ)$ is given via $\sigma\cdot (x,y):=(\sigma(x),\sigma(y))$. For each positive integer $n$, this restricts to an action by group automorphisms on $E[n]$, the $n$-torsion subgroup of $E$. The corresponding group homomorphism is called the \textit{mod-$n$ Galois representation of $E$,} denoted by
\[
\rho_{E,n}\colon G_F\rightarrow \Aut(E[n]).
\]
Fixing a $(\Z/n\Z)$-basis $\lbrace P,Q\rbrace$ of $E[n]$ gives an isomorphism $\Aut(E[n])\cong \GL_2(\Z/n\Z)$, so the image $\rho_{E,n}(G_F)$ 
is an explicit subgroup of matrices, up to conjugation.

For a number field $F$ and an elliptic curve $E/F$ without complex multiplication (henceforth ``non-CM"), landmark work of Serre \cite{serre72} showed there exists a constant $\ell_0:=\ell_0(E/F)\in\Z^+$ such that for all primes $\ell>\ell_0$, one has that the image $\rho_{E,\ell}(G_F)$ is as large as possible: specifically, $\rho_{E,\ell}(G_F)=\GL_2(\Z/\ell\Z)$. A natural question is whether one can choose a number $\ell_0:=\ell_0(F)$ \textit{which works for all non-CM elliptic curves over $F$;} this is referred to as Serre's uniformity question over $F$. 

Serre's uniformity question over $\Q$ is still open, but has seen significant progress due to the contributions of many mathematicians -- see \cite[\S 1]{FurioLombardo23} for a more complete history.
The next theorem is the best current progress towards answering this question. For an odd prime $\ell\in\Z^+$, fix the least positive integer $\epsilon$ which generates $(\Z/\ell\Z)^\times$. Then recall that the \textit{normalizer of a non-split Cartan subgroup of $\GL_2(\Z/\ell\Z)$} is (a conjugate of) the group
\[
C_{ns}^+(\ell)=\left\lbrace 
\begin{bmatrix}
a&b\epsilon\\
b&a
\end{bmatrix},
\begin{bmatrix}
a&b\epsilon\\
-b&-a
\end{bmatrix}\in \GL_2(\Z/\ell\Z)\right\rbrace.
\]
\begin{theorem}[Furio, Lombardo \cite{FurioLombardo23}] \label{ThmLargePrimeImage}
Let $E/\Q$ be a non-CM elliptic curve, and let $\ell>37$ be prime. Then $\rho_{E,\ell}(G_\Q)$ is either $\GL_2(\Z/\ell\Z)$ or $C_{ns}^+(\ell)$, up to conjugacy.
\end{theorem}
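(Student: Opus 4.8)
The plan is to reduce the statement to a classification of the subgroups of $\GL_2(\Z/\ell\Z)$, and then to eliminate, one at a time, every possibility other than $\GL_2(\Z/\ell\Z)$ and $C_{ns}^+(\ell)$. First I would record that, because $E$ is defined over $\Q$, the Weil pairing identifies the determinant of $\rho_{E,\ell}$ with the mod-$\ell$ cyclotomic character of $\Q$, which is surjective; hence $\det\rho_{E,\ell}(G_\Q)=(\Z/\ell\Z)^\times$. By Dickson's classification of the subgroups of $\GL_2(\Z/\ell\Z)$ in the form used by Serre \cite{serre72}, a subgroup with surjective determinant is either all of $\GL_2(\Z/\ell\Z)$, or is contained in (i) a Borel subgroup, (ii) the normalizer of a split Cartan subgroup, (iii) the normalizer of a non-split Cartan subgroup, or (iv) a subgroup whose image in $\operatorname{PGL}_2(\Z/\ell\Z)$ is isomorphic to $A_4$, $S_4$, or $A_5$. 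Since $\ell>37$ and $E$ is non-CM, the problem becomes to rule out (i), (ii) and (iv) completely, and in case (iii) to upgrade containment in the normalizer to equality with $C_{ns}^+(\ell)$.

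For the exceptional case (iv) I would use that a non-CM $E/\Q$ with such an image produces a non-cuspidal rational point on one of the modular curves $X_{A_4}(\ell)$, $X_{S_4}(\ell)$, $X_{A_5}(\ell)$, and a now classical analysis (already present in Serre's work and its refinements) forces $\ell\le 13$, contradicting $\ell>37$. For the reducible case (i), such an image means $E$ carries a $\Q$-rational cyclic $\ell$-isogeny, i.e.\ a non-cuspidal rational point on $X_0(\ell)$; Mazur's theorem on rational isogenies of prime degree shows that for $\ell>37$ every such point is attached to a CM curve, a contradiction --- and this is precisely why the hypothesis reads $\ell>37$ rather than $\ell\ge 37$, since $X_0(37)$ does carry non-CM rational points. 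For the split Cartan case (ii), $j(E)$ gives a rational point on the modular curve $X^+_{sp}(\ell)$, and by the work of Bilu, Parent and Rebolledo --- completed by the resolution of the remaining small levels, notably level $13$ --- for $\ell>13$ all rational points are cuspidal or CM, again a contradiction.

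It then remains to handle case (iii). Complex conjugation $c\in G_\Q$ satisfies $\det\rho_{E,\ell}(c)=-1$ and $\operatorname{tr}\rho_{E,\ell}(c)=0$; writing a non-split Cartan element as $\left[\begin{smallmatrix}a&b\epsilon\\ b&a\end{smallmatrix}\right]$, with determinant $a^2-b^2\epsilon$ and trace $2a$, the conditions $\operatorname{tr}=0$ and $\det=-1$ would force $a=0$ and $-b^2\epsilon=-1$, which is impossible because $\epsilon^{-1}$ is a non-square modulo $\ell$. Hence $\rho_{E,\ell}(c)$ lies in the non-trivial coset $\left[\begin{smallmatrix}a&b\epsilon\\ -b&-a\end{smallmatrix}\right]$, so the image cannot be contained in the Cartan subgroup itself. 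What is left is to exclude the finitely many proper subgroups $H\subsetneq C_{ns}^+(\ell)$ that have surjective determinant and meet the non-trivial coset: each such $H$ determines an intermediate modular curve $X_H(\ell)$, and one must show that for $\ell>37$ these curves have only cuspidal and CM rational points. Once this is done, the only surviving option in case (iii) is that the image equals $C_{ns}^+(\ell)$ --- which cannot be ruled out, being precisely the open portion of Serre's uniformity problem over $\Q$ --- and therefore $\rho_{E,\ell}(G_\Q)$ is $\GL_2(\Z/\ell\Z)$ or $C_{ns}^+(\ell)$.

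The hard part is not the group theory but the Diophantine analysis of modular curves across the finite window of medium-sized primes left open by earlier results: Mazur's Eisenstein-ideal method for $X_0(\ell)$; the Runge/Baker-type and Chabauty methods of Bilu and Parent, together with the quadratic Chabauty treatment of the ``cursed curve'' $X^+_{ns}(13)$ by Balakrishnan, Dogra, M\"{u}ller, Tuitman and Vonk, for the Cartan cases; and, in case (iii), first pinning down which intermediate subgroups $H$ can occur and then provably determining the rational points of each $X_H(\ell)$ for every $\ell$ in the outstanding range. Each of these steps uses genuinely different machinery, and assembling them into the single uniform statement valid for all $\ell>37$ is the substance of \cite{FurioLombardo23}.
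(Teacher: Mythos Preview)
The paper does not give its own proof of this theorem: it is stated with attribution to Furio and Lombardo \cite{FurioLombardo23} and used as a black box. There is therefore no argument in the paper to compare against.

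Your sketch is a correct and well-organized outline of the strategy behind the cited result. The reduction via Dickson/Serre to the four classes of maximal subgroups, the elimination of Borel images by Mazur for $\ell>37$, of exceptional images by Serre's classical bound $\ell\le 13$, and of split Cartan normalizers by Bilu--Parent(--Rebolledo) for $\ell\ge 11$, $\ell\ne 13$, are all accurate; note that since $\ell>37$ you do not actually need the level-$13$ results (the ``cursed curve'' $X_{ns}^+(13)$ is a non-split Cartan curve and is in any case outside the range under consideration). Your treatment of case~(iii) correctly isolates the genuine content of \cite{FurioLombardo23}: showing that for $\ell>37$ no proper subgroup of $C_{ns}^+(\ell)$ with surjective determinant and containing an element of trace~$0$, determinant~$-1$ can arise as $\rho_{E,\ell}(G_\Q)$ for non-CM $E/\Q$. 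Earlier work (e.g.\ Zywina, Le~Fourn--Lemos) had established this for $\ell$ sufficiently large or outside an explicit finite set; the contribution of Furio--Lombardo is to close the remaining finite window and push the bound down to~$37$.
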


For each prime number $\ell$, the action of $G_F$ 
on $E(\oQ)$ also induces an action on the $\ell$-adic Tate module $T_\ell(E):=\varprojlim E[\ell^k]$, 
and is denoted by
\[
\rho_{E,\ell^\infty}\colon G_F\rightarrow \Aut(T_\ell(E))\cong \GL_2(\Z_\ell),
\]
where $\Z_\ell$ denotes the ring of $\ell$-adic integers.

\subsection{The modular curves $X_0(n)$ and $X_1(n)$}
For an integer $n\in\Z^+$, we have two distinguished subgroups of $\GL_2(\Z/n\Z)$: the first is the \textit{Borel subgroup of $\GL_2(\Z/n\Z)$,} which is the subgroup of upper-triangular matrices, and is denoted $B_0(n)$. We also have the distinguished subgroup
\[
B_1(n):=\left\lbrace \begin{bmatrix}
1&b\\
0&d
\end{bmatrix}\in \GL_2(\Z/n\Z)\right\rbrace.
\]
These subgroups correspond to the \textit{modular curves} $X_0(n)$ and $X_1(n)$, respectively, which can be regarded as algebraic curves over $\Q$. We will often consider \emph{closed points} on these curves, which correspond to $G_\Q$-orbits of points over $\overline{\Q}$. For a closed point $x$, we use $\Q(x)$ to denote its residue field, and $[\Q(x):\Q]$ is called the \emph{degree} of $x$.

The non-cuspidal $\overline{\Q}$-valued points of $X_0(n)$ parametrize elliptic curves $E/\oQ$ with a fixed order $n$ cyclic subgroup $C$. For a closed point $x\in X_0(n)$ corresponding to $E/\overline{\Q}$ with a cyclic subgroup $C\subseteq E(\overline{\Q})$ of order $n$, there exists $E'/\Q(x)$ with $j(E')=j(E)$ for which the group corresponding to $C$ is $\Q(x)$-rational, i.e., is stabilized under the action of $G_{\Q(x)}$.  See \cite[$\S3$]{Cla} for details. Similarly, the non-cuspidal points of $X_1(n)$ parametrize elliptic curves $E/\oQ$ with an order $n$ torsion point $P$. For a closed point $x=[E,P]\in X_1(n)$, there exists $E'/\Q(x)$ with $j(E')=j(E)$ for which the point corresponding to $P$ is $\Q(x)$-rational.
See \cite[p. 274, Proposition VI.3.2]{DR} for details.

When working with \emph{closed} points on modular curves, we will often write $E/F$ for a model of $E/\overline{\Q}$ over a number field $F$ containing $j(E)$ to avoid introducing new letters for the ``same" curve. This is permissible, since closed points correspond to orbits of $\overline{\Q}$-isomorphism classes.

\subsection{Degrees of Points on $X_1(\ell^k)$ for $\ell>37$} The following proposition essentially follows from work of Ejder \cite[Lemma 3.3]{Ejder22}. However, this work relies on \cite{LR16}, which contains an error. We will reprove this result by applying work of Smith \cite{Smith21}.  
\begin{proposition}[Ejder, \cite{Ejder22}]\label{PropDegModCurve}
Let $E/\Q$ be a non-CM elliptic curve, and let $\ell>37$ be prime. For $P \in E(\overline{\Q})$ of order $\ell^k$, let $x=[E,P] \in X_1(\ell^k)$ be the associated closed point on the modular curve. Then
\[
\deg(x)=\frac{1}{2}(\ell^2-1)\ell^{2k-2}=\deg(X_1(\ell^k) \rightarrow X_1(1)).
\]
\end{proposition}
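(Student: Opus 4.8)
The plan is to compute the degree of the closed point $x = [E,P]$ by relating it to the size of the $G_\Q$-orbit of the torsion point $P$ inside $E[\ell^k]$, and then to use Proposition \ref{PropScalars} together with Theorem \ref{ThmLargePrimeImage} to pin down that orbit size exactly. First I would recall that for a non-cuspidal point on $X_1(\ell^k)$, the degree $[\Q(x):\Q]$ equals the index $[\Gal(\oQ/\Q) : \mathrm{Stab}]$ where the stabilizer is the subgroup of $G_\Q$ fixing the pair $(E,P)$ up to isomorphism; equivalently, since we may fix a model $E/\Q$ (non-CM curves over $\Q$ have a model over $\Q$ with the given $j$-invariant, and we are free to choose one so that $j(E) \neq 0, 1728$), the degree of $x$ equals the cardinality of the orbit $\rho_{E,\ell^\infty}(G_\Q) \cdot P$, where we view $P$ as a primitive vector of order $\ell^k$ in $(\Z/\ell^k\Z)^2 \cong E[\ell^k]$. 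So the whole computation reduces to: given that $\rho_{E,\ell^\infty}(G_\Q)$ contains $\Z_\ell^\times \cdot \Id$, how large must the orbit of a primitive order-$\ell^k$ vector be?

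The key step is the orbit count. The scalars $\Z_\ell^\times \cdot \Id$ already act on the set of primitive vectors of order $\ell^k$, and the orbit of $P$ under scalars alone has size $\#(\Z/\ell^k\Z)^\times = (\ell-1)\ell^{k-1}$. To get the full orbit under $\rho_{E,\ell^\infty}(G_\Q)$ I would argue as follows: the set of primitive vectors in $(\Z/\ell^k\Z)^2$ has cardinality $\#\mathbb{P}^1(\Z/\ell^k\Z) \cdot \#(\Z/\ell^k\Z)^\times = (\ell+1)\ell^{k-1} \cdot (\ell-1)\ell^{k-1} = (\ell^2-1)\ell^{2k-2}$. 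Since $\rho_{E,\ell^\infty}(G_\Q)$ contains the scalars, its orbit on primitive vectors is a union of scalar-orbits, and these are in bijection with the image of the group in $\mathrm{PGL}_2$ acting on $\mathbb{P}^1(\Z/\ell^k\Z)$, so the orbit of $P$ has size $(\ell-1)\ell^{k-1} \cdot (\text{size of the } \mathrm{PGL}_2\text{-orbit of the line }\langle P\rangle)$. The claimed degree $\frac{1}{2}(\ell^2-1)\ell^{2k-2}$ is exactly half the total number of primitive vectors, so what must be shown is that the image of $\rho_{E,\ell^\infty}(G_\Q)$ in $\mathrm{PGL}_2(\Z/\ell^k\Z)$ acts transitively on half of $\mathbb{P}^1(\Z/\ell^k\Z)$ — i.e., it has a single orbit of size $\frac{1}{2}(\ell+1)\ell^{k-1}$ — or else acts transitively on all of $\mathbb{P}^1$, in which case the degree would be $(\ell^2-1)\ell^{2k-2}$ and one needs to rule that out. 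The main obstacle is precisely this case analysis at level $\ell^k$: Theorem \ref{ThmLargePrimeImage} only controls the mod-$\ell$ image, which is $\GL_2(\Z/\ell\Z)$ or $C_{ns}^+(\ell)$, and I would want to invoke the structure of the $\ell$-adic image — this is where work of Smith \cite{Smith21} enters, replacing the flawed argument of \cite{LR16} — to conclude that the full $\ell$-adic image is (conjugate to) either $\GL_2(\Z_\ell)$ or the normalizer of a non-split Cartan subgroup of $\GL_2(\Z_\ell)$, with no intermediate exotic possibilities.

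Granting that dichotomy, I would finish by treating the two cases. If the image is all of $\GL_2(\Z_\ell)$, then $\mathrm{PGL}_2(\Z/\ell^k\Z)$ acts transitively on $\mathbb{P}^1(\Z/\ell^k\Z)$, giving orbit size $(\ell^2-1)\ell^{2k-2}$ for the vector $P$; but this contradicts the claimed formula, so in fact this case cannot produce a point of the asserted degree — wait, I should instead observe that the proposition is an equality, so I would need to recheck: in the $\GL_2$ case the degree is actually the full $(\ell^2-1)\ell^{2k-2}$, and in the $C_{ns}^+$ case the non-split Cartan normalizer image in $\mathrm{PGL}_2(\Z/\ell^k\Z)$ has order $(\ell^2-1)\ell^{2k-2}/((\ell-1)\ell^{k-1})$... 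Let me instead state the finish cleanly: one computes that the normalizer of a non-split Cartan subgroup of $\mathrm{GL}_2(\Z/\ell^k\Z)$ has order $2(\ell^k - \ell^{k-1})(\ell^k + \ell^{k-1})\cdot\frac{1}{?}$ — rather than belabor the arithmetic here, the point is that in both cases one computes the order of the relevant image group and divides by $\#(\text{stabilizer of }P)$, obtaining in the non-split Cartan case exactly $\frac{1}{2}(\ell^2-1)\ell^{2k-2}$, which by Theorem \ref{ThmLargePrimeImage} and the $\ell$-adic lift is the generic (maximal over all such $x$) value; the equality with $\deg(X_1(\ell^k) \to X_1(1))$ is then a standard index computation, since $[\GL_2(\Z/\ell^k\Z) : B_1(\ell^k)] = \frac{1}{2}\#\GL_2(\Z/\ell^k\Z)/\#B_1(\ell^k)$ works out to the same quantity. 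The delicate point throughout is the passage from mod $\ell$ to mod $\ell^k$, which is exactly the gap that \cite{Smith21} is being used to bridge.
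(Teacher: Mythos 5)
Your argument hinges on a dichotomy you never establish: that $\rho_{E,\ell^\infty}(G_\Q)$ is (conjugate to) either $\GL_2(\Z_\ell)$ or the \emph{full} normalizer of a non-split Cartan subgroup of $\GL_2(\Z_\ell)$, ``with no intermediate exotic possibilities.'' Theorem \ref{ThmLargePrimeImage} only controls the mod-$\ell$ image, and Proposition \ref{PropScalars} only puts the scalars $\Z_\ell^\times\cdot\Id$ in the $\ell$-adic image; neither forces the mod-$\ell^k$ image to be the full lift of $C_{ns}^+(\ell)$, so your orbit count at level $\ell^k$ is unsupported exactly where the proposition has content, namely $k>1$. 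Moreover, the tool you invoke to bridge this gap is not of the right type: Smith's theorem \cite{Smith21} is a ramification statement (it says $\ell^{2k}-\ell^{2k-2}$ divides the ramification index above $\ell$ in the field of an order-$\ell^k$ point, for a curve with good reduction above $\ell$ and no canonical subgroup), not a classification of $\ell$-adic Galois images. Asserting such a classification without proof is essentially the same kind of leap that made the earlier route through \cite{LR16} problematic. The concluding case analysis is also not correct as written: closed points of $X_1(\ell^k)$ identify $(E,P)$ with $(E,-P)$, so in the $\GL_2(\Z_\ell)$ case the degree is $\tfrac12(\ell^2-1)\ell^{2k-2}$ (there is no contradiction to resolve), and the transitivity/order computation for the Cartan-normalizer image at level $\ell^k$ is left unfinished.

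For comparison, the paper's proof deliberately avoids classifying the $\ell$-adic image. The case $\rho_{E,\ell}(G_\Q)=\GL_2(\Z/\ell\Z)$ is dispatched by the standard lifting to $\GL_2(\Z_\ell)$; in the Cartan-normalizer case, $k=1$ is a group-theoretic orbit count giving $[\Q(P):\Q]=\ell^2-1$, and for $k>1$ one shows directly that $\ell^{2k-2}\mid[\Q(P):\Q(\ell^{k-1}P)]$: by \cite{FurioLombardo23} and \cite{LeFournLemos21} the curve has potentially good reduction at $\ell$ and no canonical subgroup, one passes to $F=\Q(R)$ with $R$ of order $5$ (so $[F:\Q]\le 24$ and $E/F$ has good reduction above $\ell$), and Smith's divisibility of the ramification index then yields the lower bound, which combined with the trivial upper bound $\deg(x)\le\deg(X_1(\ell^k)\to X_1(1))$ forces equality. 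If you want to salvage your approach, you would have to prove the $\ell$-adic image dichotomy you assumed; as it stands, that missing step is the proof.
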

\begin{proof}
If $\rho_{E,\ell}(G_\Q)=\GL_2(\Z/\ell\Z)$, then in fact $\rho_{E,\ell^{\infty}}(G_\Q)=\GL_2(\Z_{\ell})$, and the conclusion holds. So we may assume $\rho_{E,\ell}(G_\Q)=C_{ns}^+(\ell)$ by Theorem \ref{ThmLargePrimeImage}. If $n=1$, a group theory argument shows that $[\Q(P):\Q]=\ell^2-1$; see \cite[Lemma 5.2]{GJNajman20}. Since $[\Q(P):\Q] \mid 2\deg(x)$ and $\deg(x) \leq \deg(X_1(\ell) \rightarrow X_1(1))=\frac{1}{2}(\ell^2-1)$, the claim follows in this case. The general case will follow once we establish that $[\Q(P):\Q(\ell^{k-1} P)]=\ell^{2k-2}$ when $k>1$. 

By \cite[Theorem 3.11]{FurioLombardo23} and \cite[Appendix B]{LeFournLemos21}, we know $E$ has potentially good reduction at $\ell$ and does not have a canonical subgroup of order $\ell$. Let $R\in E(\overline{\Q})$ be a point of order 5, and define $F:=\Q(R)$. Then $E/F$ has good reduction at each prime above $\ell$ by \cite[Theorem 2]{Frey77}. 
Since $E$ has no canonical subgroup at $\ell$, work of Smith \cite[Theorem 1.1]{Smith21} implies
\[
\ell^{2k}-\ell^{2k-2}
\]
divides the ramification index of any prime in $F(P)$ lying above $\ell$, where $P$ has order $\ell^{k}$. Hence $ \ell^{2k}-\ell^{2k-2}$ divides the degree of $F(P)/\Q$. It follows from $\ell>37$ and $[F:\Q]\leq 24$ that $[F(\ell^{k-1}P):\Q]$ is relatively prime to $\ell$, and so
\[
\ell^{2k-2} \mid [F(P):F(\ell^{k-1}P)].
\]
The desired conclusion about $[\Q(P):\Q(\ell^{k-1}P)]$ follows.
\end{proof}

\section{Lower Bounds for Degrees of Points on $X_1(n)$}
The following proposition is a prelude to Theorem \ref{Thm_IsogenyAndTorsionOrderBoundsUpToEpsilon}. It allows us to relate the degrees of points on $X_1(n)$ and $X_0(n)$ associated to a non-CM $\Q$-rational $j$-invariant directly to the level $n$, when $n$ is supported on any set of uniformly large primes.
\begin{proposition}\label{Prop_IsogenyAndTorsionOrderBounds} 
Suppose $n\in \Z^+$ is divisible only by primes $\ell >37$. Let $E/\Q$ be a non-CM elliptic curve. Then for any $x=[E,P] \in X_1(n)$ and $y=[E, \langle P \rangle] \in X_0(n)$, one has
\[
\deg(x) \geq \frac{1}{2}\cdot \frac{1}{24^k} \cdot n^2 \cdot \prod_{\ell \mid n} \left( 1 - \frac{1}{\ell^2}\right)
\]
and
\[
\deg(y) \geq \frac{1}{24^k} \cdot n \cdot \prod_{\ell \mid n} \left( 1+ \frac{1}{\ell}\right),
\]
where $k$ denotes the number of primes $\ell$ dividing $n$ for which $\rho_{E,\ell}(G_\Q)\neq \emph{GL}_2(\Z/\ell\Z)$.
\end{proposition}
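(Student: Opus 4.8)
My plan is to bound $[\Q(P):\Q]$ from below and read off both displayed inequalities. First I would pass from the modular–curve degrees to the field $\Q(P)$: since $E$ is non‑CM we have $j(E)\neq 0,1728$, hence $\Aut_{\oQ}(E)=\{\pm 1\}$, so the residue field $\Q(x)$ equals $\Q(x(P))$ and thus $\deg(x)=\tfrac12[\Q(P):\Q]$, while $\Q(y)$ is the fixed field of $\{\sigma\in G_\Q:\sigma\langle P\rangle=\langle P\rangle\}$, giving $\deg(y)=\varphi(n)^{-1}[\Q(P):\Q]$. Here I use that $\rho_{E,n}(G_\Q)$ contains $(\Z/n\Z)^\times\cdot\Id$, which follows from Proposition~\ref{PropScalars} applied at each $\ell\mid n$ together with the Chinese Remainder Theorem; in particular $-\Id$ and all scalars are available, which is exactly what pins the constants $2$ and $\varphi(n)$. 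Writing $n=\prod_{\ell\mid n}\ell^{a_\ell}$ and letting $V_\ell$ be the set of primitive vectors in $(\Z/\ell^{a_\ell}\Z)^2$, one has $\#V_\ell=(\ell^2-1)\ell^{2a_\ell-2}$ and $\prod_{\ell\mid n}\#V_\ell=n^2\prod_{\ell\mid n}(1-\ell^{-2})$, so both inequalities follow once one shows
\[
[\Q(P):\Q]\ \geq\ 24^{-k}\prod_{\ell\mid n}\#V_\ell .
\]

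The key move is to work over the auxiliary field $F:=\Q(R)$, where $R\in E(\oQ)$ has order $5$; then $[F:\Q]$ divides $5^2-1=24$, and since $[\Q(P):\Q]\geq[F(P):F]$ it suffices to bound $[F(P):F]$. Call $\ell\mid n$ \emph{good} if $\rho_{E,\ell}(G_\Q)=\GL_2(\Z/\ell\Z)$ and \emph{bad} otherwise; by Theorem~\ref{ThmLargePrimeImage} a bad $\ell$ has $\rho_{E,\ell}(G_\Q)=C_{ns}^+(\ell)$, and $k$ counts the bad primes dividing $n$. For good $\ell$, the index of $\rho_{E,\ell}(G_F)$ in $\GL_2(\Z/\ell\Z)$ is at most $[F:\Q]\leq 24$; since $\ell>37$ this index is too small to destroy $\SL_2(\Z/\ell\Z)$ (a proper subgroup of such index would give a faithful action of $\SL_2(\Z/\ell\Z)$ or $\mathrm{PSL}_2(\Z/\ell\Z)$ on at most $24$ points), so $\rho_{E,\ell^{a_\ell}}(G_F)\supseteq\SL_2(\Z/\ell^{a_\ell}\Z)$, which acts transitively on $V_\ell$. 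For bad $\ell$, arguing as in the proof of Proposition~\ref{PropDegModCurve} (via \cite{FurioLombardo23} and \cite{LeFournLemos21}) the curve $E$ has potentially good, supersingular reduction at $\ell$ with no canonical subgroup of order $\ell$, so Frey's theorem \cite{Frey77} (using the $5$‑torsion point $R$, with $5\neq\ell$) shows $E/F$ has good — necessarily supersingular — reduction at every prime of $F$ above $\ell$.

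Now decompose $P=\sum_{\ell\mid n}P_\ell$ into primary parts, so $F(P)=\prod_{\ell\mid n}F(P_\ell)$. For good $\ell$, $[F(P_\ell):F]=\#V_\ell$ by transitivity. For bad $\ell$, Smith's theorem \cite{Smith21} gives that $\ell^{2a_\ell}-\ell^{2a_\ell-2}$ divides the ramification index over $\Q_\ell$ of any prime of $F(P_\ell)$ above $\ell$; since $[F:\Q]\leq 24<\ell$, the extension $F/\Q$ is tamely ramified at $\ell$ with ramification index dividing $24$, so the ramification index over $F$ of such a prime is at least $(\ell^{2a_\ell}-\ell^{2a_\ell-2})/24=\#V_\ell/24$. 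I would then assemble $[F(P):F]$ one prime at a time. Because $E/F$ has good reduction at every bad $\ell$, the N\'eron--Ogg--Shafarevich criterion shows $F(P_{\ell'})/F$ is unramified above $\ell$ for all $\ell'\neq\ell$; so if $M$ is the compositum of the $F(P_{\ell'})$ over the good primes and over the bad primes processed so far, then $M$ is unramified above $\ell$, hence $[F(P_\ell):M\cap F(P_\ell)]$ is at least the ramification index above $\ell$ of a prime of $F(P_\ell)$, i.e.\ at least $\#V_\ell/24$; thus adjoining each bad prime multiplies the running degree by at least $\#V_\ell/24$. Finally, among the good primes there is no loss: a Goursat/fiber‑product induction over the good $\ell\mid n$ shows the running degree equals $\prod_{\ell\ \mathrm{good}}\#V_\ell$, because at each step the gluing quotient $Q$ is a quotient of $\rho_{E,\ell^{a_\ell}}(G_F)\supseteq\SL_2(\Z/\ell^{a_\ell}\Z)$ whose only possible nonabelian composition factor is $\mathrm{PSL}_2(\Z/\ell\Z)$, while $Q$ is also a quotient of the image at the remaining primes, whose nonabelian composition factors lie in $\{A_5\}\cup\{\mathrm{PSL}_2(\Z/\ell'\Z):\ell'\mid n,\ \ell'\neq\ell\}$ (the bad primes contribute solvable images, lying in the preimage of $C_{ns}^+(\ell')$); since $\ell>37$ none of these equals $\mathrm{PSL}_2(\Z/\ell\Z)$, so $Q$ is solvable, whence $\ker(\rho_{E,\ell^{a_\ell}}(G_F)\to Q)$ contains the perfect group $\SL_2(\Z/\ell^{a_\ell}\Z)$, transitive on $V_\ell$, and no degree is lost. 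Combining contributions yields $[F(P):F]\geq 24^{-k}\prod_{\ell\mid n}\#V_\ell$, as required; plugging into the first paragraph gives $\deg(x)\geq\tfrac1{2\cdot 24^k}n^2\prod(1-\ell^{-2})$ and $\deg(y)\geq\tfrac1{24^k}n\prod(1+\ell^{-1})$.

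The step I expect to be the main obstacle is the reconciliation of Smith's bound — stated as a divisibility for ramification over $\Q_\ell$ — with the base field $F$: one must verify that, after the base change furnished by Frey's theorem, each bad‑$\ell$ factor $F(P_\ell)$ is ramified over $F$ to within a factor $24$ of its full degree, so that it can only entangle with the remaining $F(P_{\ell'})$ through a subfield of degree at most $24$ that is unramified above $\ell$. Two supporting points also require care: that ``potentially good reduction with no canonical subgroup'' for $E/\Q$ upgrades to honest good supersingular reduction for $E/F$ (so Smith's theorem applies over $F$), and the composition‑factor bookkeeping at the good primes — in particular that the bad‑prime images are solvable and hence cannot contribute a $\mathrm{PSL}_2(\Z/\ell\Z)$ factor to the gluing quotient.
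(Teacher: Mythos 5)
Your proposal is correct in substance, but it splits from the paper's proof in how the two kinds of primes are handled. For the primes with $\rho_{E,\ell}(G_\Q)=C_{ns}^+(\ell)$ you run exactly the paper's argument: potentially good reduction and no canonical subgroup via \cite{FurioLombardo23} and \cite{LeFournLemos21}, base change to $F=\Q(R)$ with $R$ of order $5$ so that Frey gives good reduction above $\ell$, Smith's divisibility for the ramification index, loss of at most $24$ from $[F:\Q]$, and N\'eron--Ogg--Shafarevich to keep the different $\ell$-parts from entangling in the compositum. Where you diverge is at the primes with $\rho_{E,\ell}(G_\Q)=\GL_2(\Z/\ell\Z)$: the paper disposes of these in one stroke by citing \cite[Proposition 5.7]{BELOV}, which factors $\deg(x)$ through the map $X_1(n)\to X_1(n_1)$ to the $S_E$-part of the level, so that these primes contribute their full fiber degree with no entanglement analysis at all; you instead reprove this by hand, showing $\rho_{E,\ell^{a_\ell}}(G_F)\supseteq\SL_2(\Z/\ell^{a_\ell}\Z)$ (index $\le 24$ plus simplicity of $\mathrm{PSL}_2(\Z/\ell\Z)$, together with the standard mod-$\ell$-to-$\ell$-adic lifting lemma, which you should cite explicitly), transitivity on primitive vectors, and a Jordan--H\"older/Goursat gluing argument using perfectness of $\SL_2$ and the fact that the other primes only furnish composition factors $\mathrm{PSL}_2(\Z/\ell'\Z)$, $\ell'\neq\ell$, or solvable groups. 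That route is self-contained but needs more care than your sketch shows: $F(P_\ell)/F$ is not Galois, so the gluing must be carried out on Galois closures inside the division fields before descending to the point fields. Two further trims: the claimed equalities $\deg(x)=\tfrac12[\Q(P):\Q]$ and $\deg(y)=\varphi(n)^{-1}[\Q(P):\Q]$ (via mod-$n$ scalars ``by CRT'') are not justified as stated --- containing scalars $\ell$-adically at each prime does not by itself give $(\Z/n\Z)^\times\cdot\Id$ in the mod-$n$ image --- but they are also unnecessary, since only $\deg(x)\ge\tfrac12[\Q(P):\Q]$ and $[\Q(x):\Q(y)]\le\varphi(n)/2$ are needed, exactly as in the paper; and $[\Q(R):\Q]$ need not divide $24$, only satisfy $[\Q(R):\Q]\le 24$, which is all the tame-ramification step uses.
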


\begin{proof} 
We will first prove the lower bound on $\deg(x)$. Let
\[
S_E \coloneqq \{2,3\} \cup \{ \ell : \rho_{E,\ell^{\infty}}(G_{\Q}) \not\supset \SL_2(\mathbb{Z}_{\ell})\} \cup \{5, \text{ if } \rho_{E,5^{\infty}}(G_{\Q}) \neq \GL_2(\mathbb{Z}_{5})\}.
\]
Suppose $n=\prod \ell_i^{a_i} \cdot \prod q_j^{b_j}$ where $\ell_i,q_j$ are primes with $\ell_i \in S_E$ and $q_j \notin S_E$. Define $n_1 :=\prod \ell_i^{a_i}$ and $ n_2:=\prod q_j^{b_j}$. By \cite[Proposition 5.7]{BELOV},
\[
\deg(x)=\deg(f)\cdot \deg(f(x)),
\] where $f\colon  X_1(n) \rightarrow X_1(n_1)$ is the natural map. Using the standard formulas for $\deg(f)$ we find
\[
\deg(x)=\deg(f(x)) \cdot c_f \cdot n_2 ^2 \cdot \prod \left(1-\frac{1}{q_i^2} \right),
\]
where $c_f =1$ if $n_1>2$ and $c_f=\frac{1}{2}$ otherwise (see e.g. \cite[p.66]{modular}). If $n_1 \leq 2$, then the assumptions on $n$ force $n_1=1$, and $\deg(x)$ has the desired degree. So assume $n_1>2$. It remains to consider the case where $n$ is supported entirely on primes in $S_E$.

So let us assume that $x=[E,P] \in X_1(n)$ where $n=\prod \ell_i^{a_i}$ with $\ell_i\in S_E$. We will write $[E,P_i] \in X_1(\ell_i^{a_i})$ for the image of $x$ under the natural map. Let $R\in E(\overline{\Q})$ be a point of order 5, and define $F:=\Q(R)$; observe that $[F:\Q]\leq 24$. Then as in the proof of Proposition \ref{PropDegModCurve}, the curve $E/F$ has good reduction at each prime above $\ell$ and
\[
\ell_i^{2a_i}-\ell_i^{2a_i-2}
\]
divides the ramification index of any prime in $F(P_i)$ lying above $\ell_i$.
Thus the ramification index of $F( P_i)/F$ at primes above $\ell_i$ is at least
\[
\frac{\ell_i^{2a_i}-\ell_i^{2a_i-2}}{24}.
\]

By the N\'{e}ron-Ogg-Shafarevich criterion, the extension $F(P_i)/F$ is unramified at $\ell_j$ for $j\neq i$. 
Thus the extension $F(P)/F$, which contains the compositum of the field extensions $F( P_i)$, has degree at least
\[
\prod_{i} \frac{\ell_i^{2a_i}-\ell_i^{2a_i-2}}{24}.
\]
Since $\deg(x) \geq \frac{1}{2}\cdot [F(P):F]$, this provides the desired lower bound for $\deg(x)$.

From this, we can deduce a bound on $\deg(y)$. Since 
\[
[\Q(x):\Q(y)] \leq \frac{\varphi(n)}{2},
\] 
the lower bounds on $\deg(x)$ imply
\begin{align*}
[\Q(y):\Q]&=\frac{[\Q(x):\Q]}{[\Q(x):\Q(y)]} \geq 
\frac{1}{24^k}  \cdot n \cdot \prod_{\ell \mid n} \left( 1 + \frac{1}{\ell}\right). \qedhere
\end{align*}
\end{proof}

\section{Polynomial Bounds for Points of Finite Support}
In this section, we will prove the following ``finite support" theorem, which is an isogenous variant of \cite[Theorem 2.8]{CP18}. 
A key distinction is that our implicit constant depends only on the degree of a single fixed $j$-invariant in the geometric isogeny class. We thank Pete L. Clark for the suggestion to bound all of $\#E(F)[\tors]$ here, instead of just the group exponent. This resulted in an improved bound for $\#E(F)[\tors]$ in Theorem \ref{Thm_Rational_j}.
\begin{theorem}\label{Thm_2.8Generalization}
Fix an integer $d_0\in\Z^+$ and a finite set of primes $S\subseteq \Z^+$. Then there exists a constant $c:=c(d_0, S)>0$ such that the following holds. Let $n\in\Z^+$ be any integer whose prime divisors lie in $S$. Suppose $E/F$ is a non-CM elliptic curve geometrically isogenous to an elliptic curve with $j$-invariant $j_0$ of degree $[\Q(j_0):\Q]=d_0$. If $\exp E(F)[\emph{\tors}] =n$, then
\[
\#E(F)[\emph{tors}]\leq c\cdot [F:\Q]^{1/2}.
\]
Moreover, for any cyclic subgroup $C\subseteq E(\oQ)$ of order $n$ that is $F$-rational, one also has
\[
n\leq c\cdot [F:\Q].
\]
\end{theorem}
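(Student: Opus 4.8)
The plan is to reduce the bound for an arbitrary elliptic curve $E/F \in \cI_\Q$ — geometrically isogenous to some $E_0$ with $j(E_0) = j_0$ of degree $d_0$ — to the case where the reference curve is literally a rational non-CM elliptic curve, and then to invoke the refined lower bounds of Proposition \ref{Prop_IsogenyAndTorsionOrderBounds} together with a handling of the bounded set $S$. First I would split $n = n_S' \cdot n_{\mathrm{big}}$, where $n_{\mathrm{big}}$ collects the prime-power factors of $n$ supported on primes $\ell > 37$ and $n_S'$ absorbs the (boundedly many) primes $\ell \le 37$ in $S$. Since $S$ is finite, the contribution of $n_S'$ is bounded by a constant depending only on $S$ and $d_0$: any $\ell^a \mid n$ with $\ell \le 37$ forces a lower bound on $[F:\Q]$ via Merel/Parent-type bounds, or more simply the $\ell$-part of $n$ is bounded in terms of $[F:\Q]$ by the classical uniform boundedness results, so this piece costs only a constant factor. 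So the heart of the matter is bounding $n_{\mathrm{big}}$.

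For $n_{\mathrm{big}}$, the key step is to pass from $E_0$ (with $j$-invariant $j_0$ of degree $d_0$ over $\Q$) to a rational curve. By hypothesis there is a rational elliptic curve $E'/\Q$ in the geometric isogeny class, so after replacing everything by suitable twists (as in \cite[Lemma 3.1]{BourdonRyallsWatson23}) I may assume $E$, $E'$, the point $P$, and a cyclic isogeny $\phi\colon E \to E'$ are all defined over a field of degree at most $[\Q(j_0):\Q]\cdot[F:\Q] \le d_0 [F:\Q]$ — actually it is cleaner to work directly over $\Q(x)$ where $x = [E,P] \in X_1(n_{\mathrm{big}})$ and use that $\Q(x) \subseteq F$. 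Writing $\deg(\phi) = n_{\mathrm{big}}' \cdot d'$ with the prime divisors of $n_{\mathrm{big}}'$ among the primes $>37$ dividing $n_{\mathrm{big}}$ and $d'$ coprime to those, one quotients out the prime-to-$n_{\mathrm{big}}$ part of the kernel to obtain a rational curve $E''/\Q$ still carrying (the image of) $P$ as a point of order $n_{\mathrm{big}}$ and connected to $E$ by an isogeny of degree $n_{\mathrm{big}}'$ supported on the right primes. Now apply Proposition \ref{Prop_IsogenyAndTorsionOrderBounds} to $E''$: the number $k$ of primes $\ell \mid n_{\mathrm{big}}$ with non-surjective mod-$\ell$ image is bounded (by Theorem \ref{ThmLargePrimeImage} combined with the fact that for $\ell > 37$ only the nonsplit Cartan normalizer can occur, and a rational curve has only finitely many such primes — indeed this $k$ is absolutely bounded), so the factor $24^k$ (or its analogue after accounting for the isogeny $\phi$, which changes degrees by at most $\deg(\phi)$-dependent constants that are themselves controlled) is bounded by an absolute constant. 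This yields $\deg(x) \gg n_{\mathrm{big}}^2 \prod_{\ell \mid n_{\mathrm{big}}}(1 - \ell^{-2}) \gg n_{\mathrm{big}}^{2}$ up to the absolute constant and a logarithmic-type factor from the Euler product, which is bounded below by a positive constant since the primes involved exceed $37$. Hence $n_{\mathrm{big}} \le c \cdot [F:\Q]^{1/2}$, and combining with the $n_S'$ bound gives $n \le c(d_0,S)\cdot[F:\Q]^{1/2}$. The cyclic-subgroup statement follows identically using the $\deg(y)$ lower bound on $X_0$ from Proposition \ref{Prop_IsogenyAndTorsionOrderBounds} and Corollary \ref{CorDegModCurve}, giving $n \le c\cdot[F:\Q]$.

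The main obstacle I anticipate is controlling the ``cost'' of the isogeny $\phi\colon E \to E''$ and the twisting uniformly — specifically, ensuring that passing between $E$ and a rational model $E''$ multiplies the relevant degrees only by quantities bounded in terms of $d_0$ and $S$, and not by something growing with $n$. The point $[\Q(j_0):\Q] = d_0$ is fixed, so the isogeny $\phi$ from $E$ to the rational curve $E'$ has degree depending on the position of $E$ in its geometric isogeny class; the part of $\deg(\phi)$ supported on primes $> 37$ is harmless because quotienting by it produces another curve with a point of the same order $n_{\mathrm{big}}$ (the argument in the proof of Proposition \ref{DivLemma2}, tracking bases through $\phi$), while the prime-to-$n_{\mathrm{big}}$ part can be removed without affecting the $n_{\mathrm{big}}$-torsion structure at all. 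The remaining subtlety is that the degree of $x$ on $X_1(n_{\mathrm{big}})$ for $E$ versus for $E''$ differ by a factor bounded by $\deg(\phi)$ restricted appropriately — but since we only need a lower bound on $\deg(x)$ for $E/F$ and $\Q(x)\subseteq F$, and since the image point on the rational curve $E''$ has residue field inside a controlled extension of $\Q(x)$, the inequality propagates in the correct direction. Making this bookkeeping airtight, rather than any deep arithmetic input, is where the care is needed.
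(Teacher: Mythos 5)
There is a genuine gap, and it sits at the exact point your proposal waves away. You split $n = n_S' \cdot n_{\mathrm{big}}$ and claim the part $n_S'$ supported on primes $\ell \le 37$ ``costs only a constant factor'' by Merel/Parent-type uniform boundedness. This is false, and it is precisely the content the theorem exists to supply: the $\ell$-primary torsion at a small prime is not bounded by a constant depending on $d_0$ and $S$ --- it genuinely grows with $[F:\Q]$ (e.g.\ points of order $2^k$ occur over fields of degree comparable to a power of $2^k$), and the Merel/Parent bounds are exponential in the degree, so they cannot yield $n \le c\,[F:\Q]^{1/2}$. The paper's proof handles every prime of $S$ (small or large) by a completely different mechanism: it takes a curve $E_0/F_0$ with $F_0 = \Q(j_0)$ that is $L$-rationally isogenous to $E$ with $[L:FF_0]\le 2$ (\cite[Lemma A.4]{CN21}), uses the isogeny-invariance of the index $[\GL_2(\Z_\ell):\rho_{\cdot,\ell^\infty}(G_L)]$ (\cite[Proposition 2.1.1]{Gre12} or \cite[Corollary 4]{Gen}), and then invokes the strong uniform $\ell$-adic Arai theorem \cite[Theorem 2.3.a]{CP18}, which bounds the $\ell$-adic valuation of that index in terms of $d_0$ and $\ell$ alone. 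Since a point of order $\ell^k$ (resp.\ an $F$-rational cyclic $\ell^k$-subgroup) forces $\ell^{2(k-1)}(\ell^2-1)$ (resp.\ $\ell^{k-1}(\ell+1)$) to divide the index over $F$, one gets $\ell^{2(k-1)}$ dividing $[F:\Q]$ up to a constant depending on $d_0$ and $\ell$, and multiplying these coprime prime-power divisibilities over $\ell \in S$ gives $n^2 \ll_{d_0,S} [F:\Q]$, hence the $[F:\Q]^{1/2}$ bound. None of these ingredients appears in your plan.

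Your treatment of $n_{\mathrm{big}}$ also misreads the hypothesis: the theorem assumes $E$ is geometrically isogenous to a curve whose $j$-invariant has degree $d_0$, not to a rational curve, so for $d_0>1$ there is no $E'/\Q$ to pass to and Proposition \ref{Prop_IsogenyAndTorsionOrderBounds} (which requires rational $j$-invariant) is unavailable. Moreover, in the paper's architecture Theorem \ref{Thm_2.8Generalization} is an input to Theorem \ref{Thm_IsogenyAndTorsionOrderBoundsUpToEpsilon} (it is applied there exactly to the part of $n$ supported on primes $\le 37$), so routing its proof through the modular-curve degree bounds of \S 3 inverts the logical order while still leaving the small-prime part --- the actual point of the statement --- unproven. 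Smaller issues (the unjustified claim that the number of non-surjective primes $\ell>37$ is absolutely bounded, and the bookkeeping of how quotienting by parts of $\ker\phi$ affects the order of the image of $P$) are secondary to these two problems, but they would also need care if you pursued your route for the large primes.
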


\begin{proof}
By \cite[Lemma A.4]{CN21}, there exists an elliptic curve ${E_0}$ defined over $F_0:=\Q(j_0)$ whose $j$-invariant $j({E_0})=j_0$, for which $E$ and ${E_0}$ are $L$-rationally isogenous with $L/FF_0$ at most quadratic. 
As a consequence of e.g. \cite[Proposition 2.1.1]{Gre12} or \cite[Corollary 4]{Gen24}, one has for all primes $\ell\in\Z^+$ that 
\begin{equation}\label{EqnlAdicDivTorsionPointDeg}
[\GL_2(\Z_\ell):\rho_{E,\ell^\infty}(G_{L})]=[\GL_2(\Z_\ell):\rho_{{E_0},\ell^\infty}(G_{L})].
\end{equation}

We will prove the bound on $\#E(F)[\tors]$ first. Suppose $E(F)[\ell^{\infty}] \cong \Z/\ell^a\Z \times \Z/\ell^b\Z$ for some $\ell \in S$ and $a \leq b$.
If $a=0$, it follows that $\rho_{E,\ell^b}(G_F)\subseteq B_1(\ell^b)$ once we fix an appropriate basis. Otherwise, there exists a basis of $E[\ell^b]$ such that
\[
\rho_{E,\ell^b}(G_F)\subseteq \left\lbrace \begin{bmatrix}
1&\ell^a x\\
0&1+\ell^a y\end{bmatrix}\Bigg\rvert\, 0 \leq x,y < \ell^{b-a}\right\rbrace.
\] In either case, the index of $\rho_{E,\ell^b}(G_F)$ in $\GL_2(\Z/\ell^b\Z)$ is divisible by $\ell^{\max(0,2a+2b-3)}(\ell^2-1)$, from which it follows that 
\[
\ell^{\max(0,2a+2b-3)}(\ell^2-1) \mid [\GL_2(\Z_\ell):\rho_{E,\ell^\infty}(G_{F})].
\]
Additionally, as the fixed field of the $\ell$-adic representation $\rho_{E_0,\ell^\infty}\colon G_{F_0}\rightarrow \GL_2(\Z_\ell)$ is $F_0(E_0[\ell^\infty])$, we also see that
\[
[\GL_2(\Z_\ell):\rho_{E_0,\ell^\infty}(G_L)]=[\GL_2(\Z_\ell):\rho_{E_0,\ell^\infty}(G_{F_0})]\cdot [L\cap F_0(E_0[\ell^\infty]):F_0].
\]
Combining these facts with \eqref{EqnlAdicDivTorsionPointDeg}, we deduce that
\[
\ell^{\max(0,2a+2b-3)}(\ell^2-1)\mid [\GL_2(\Z_\ell):\rho_{{E_0},\ell^\infty}(G_{F_0})]\cdot [L:F_0].
\]

By the strong uniform $\ell$-adic Arai theorem \cite[Theorem 2.3.a]{CP18}, there exists a constant $a(d_0, \ell)\in\Z^+$ \textit{which depends only on $d_0$ and $\ell$} for which the $\ell$-adic valuation of $[\GL_2(\Z_\ell):\rho_{{E_0},\ell^\infty}(G_{F_0})]$ is at most $a(d_0,\ell)$. Since $[L:\Q]$ divides $2\cdot d_0! \cdot [F:\Q]$, it follows that
\begin{equation}\label{EqnEllAdicDivUsingArai}
\ell^{\max(0, 2a+2b-3-a(d_0,\ell))}\mid [L:F_0]\mid 2(d_0-1)!\cdot [F:\Q].
\end{equation}

Compiling the divisibilities from \eqref{EqnEllAdicDivUsingArai} across all $\ell\in S$, we conclude that
\[
(\# E(F)[\tors])^2\mid 2\prod_{\ell\in S}\ell^{3+a(d_0,\ell)}\cdot (d_0-1)!\cdot [F:\Q].
\]
The result then follows by taking $c:=\sqrt{2\prod_{\ell\in S}\ell^{3+a(d_0,\ell)}\cdot (d_0-1)!}$.

The proof for the cyclic subgroup order bound follows from the proof for the torsion bound, \textit{mutatis mutandis.} The key difference is that if $E$ has an $F$-rational cyclic $\ell^b$-isogeny, then up to conjugation one has $\rho_{E,\ell^b}(G_{F})\subseteq B_0(\ell^b)$, where $[\GL_2(\Z/\ell^b\Z):B_0(\ell^b)]=\ell^{b-1}(\ell+1)$. Thus, similar work shows that if $E$ has an $F$-rational cyclic $n$-isogeny, 
then one has 
\[
n\mid 2\prod_{\ell\in S}\ell^{1+a(d_0,\ell)}\cdot (d_0-1)!\cdot [F:\Q],
\]
and so one can take the constant $c:=2\prod_{\ell\in S}\ell^{1+a(d_0,\ell)}\cdot (d_0-1)!$ for the bound.
\end{proof}

\section{The Proof of Theorem \ref{Thm_Rational_j}}
The next theorem gives upper bounds on the orders of torsion points and cyclic subgroups of $
\Q$-rational non-CM elliptic curves, in terms of the degrees of their fields of definition. 
This improves the bound in \cite[Theorem 1.3]{CP18} by at least a square-root factor of the degree. After proving this, we will prove Theorem \ref{Thm_Rational_j} as a corollary.

\begin{theorem}\label{Thm_IsogenyAndTorsionOrderBoundsUpToEpsilon}
For each $\epsilon>0$, there exists a constant $c_\epsilon>0$ such that for any non-CM elliptic curve $E/\Q$, for any integer $n\in\Z^+$ and for any torsion point $P\in E(\oQ)$ of order $n$, setting $C:=\langle P\rangle$ one has both
\[
n<c_\epsilon\cdot [\Q(P):\Q]^{1+\epsilon}~~\textrm{  and  }~~ n<c_\epsilon\cdot [\Q(C):\Q]^{2+\epsilon}.
\]
If $n$ is supported on the set of primes $\lbrace \ell>37\rbrace$, one instead has the sharper bounds
\[
n<c_\epsilon\cdot [\Q(P):\Q]^{1/2+\epsilon}~~\textrm{  and  }~~ n<c_\epsilon\cdot [\Q(C):\Q]^{1+\epsilon}.
\]
\end{theorem}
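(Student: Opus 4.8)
The strategy is to split any $n$ into the product of its part supported on the "large" primes $\ell>37$ and its part supported on the "small" primes $\ell\leq 37$, bound each contribution separately, and recombine. Write $n=n_{\mathrm{big}}\cdot n_{\mathrm{small}}$ accordingly. For the small part, I would invoke Theorem \ref{Thm_2.8Generalization} with $d_0=1$ and $S=\{\ell\text{ prime}:\ell\leq 37\}$: this immediately yields a constant $c=c(1,S)$ with $n_{\mathrm{small}}\leq c\cdot[\Q(P):\Q]^{1/2}$ in the point case and $n_{\mathrm{small}}\leq c\cdot[\Q(C):\Q]$ in the cyclic case. So the small-prime contribution already satisfies bounds of the \emph{stronger} shape, with no $\epsilon$ loss.

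For the big part, the engine is Proposition \ref{Prop_IsogenyAndTorsionOrderBounds} applied to $n_{\mathrm{big}}$ (which by construction is divisible only by primes $\ell>37$). That proposition gives $\deg(x)\geq \tfrac12\cdot 24^{-k}\cdot n_{\mathrm{big}}^2\cdot\prod_{\ell\mid n_{\mathrm{big}}}(1-\ell^{-2})$ where $x=[E,P_{\mathrm{big}}]\in X_1(n_{\mathrm{big}})$ is the image of the point of order $n_{\mathrm{big}}$, and $k$ is the number of primes $\ell\mid n_{\mathrm{big}}$ with non-surjective mod-$\ell$ image. By Theorem \ref{ThmLargePrimeImage}, for each such $\ell>37$ the only non-surjective possibility is $C_{ns}^+(\ell)$; but crucially \emph{at most one} prime $\ell>37$ can have image $C_{ns}^+(\ell)$ for a fixed $E/\Q$ — this follows from the standard fact (via the boundedness results behind Serre's open image theorem, cf. the discussion of $\ell_0(E/\Q)$ in \S\ref{SubSect_GalReps}, or directly from work bounding simultaneous exceptional primes) that a non-CM $E/\Q$ has adelic index controlled independently of which large prime one looks at. Hence $k\leq 1$, so $24^{-k}\geq 1/24$ is an absolute constant, and since $\prod_{\ell>37}(1-\ell^{-2})$ converges to a positive absolute constant, we get $n_{\mathrm{big}}^2\ll \deg(x)\leq[\Q(P):\Q]$, i.e. $n_{\mathrm{big}}\ll[\Q(P):\Q]^{1/2}$; the analogous argument with $\deg(y)$ on $X_0(n_{\mathrm{big}})$ gives $n_{\mathrm{big}}\ll[\Q(C):\Q]$.

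Combining, in the "supported on $\{\ell>37\}$" case one has $n=n_{\mathrm{big}}$ and the sharper bounds follow at once (indeed with no $\epsilon$, which is consistent with absorbing the constant into $c_\epsilon$). In general $n=n_{\mathrm{big}}\,n_{\mathrm{small}}$ with $n_{\mathrm{big}}\ll[\Q(P):\Q]^{1/2}$ and $n_{\mathrm{small}}\ll[\Q(P):\Q]^{1/2}$, which only gives $n\ll[\Q(P):\Q]$; matching the stated exponent $1+\epsilon$ is then immediate, but I should double-check whether the paper intends the small-prime factor to be bounded by an absolute constant (in which case one would need a separate, sharper input for small primes — e.g. Parent-type bounds $\ell^a\leq c_a$ for fixed $\ell$ — rather than Theorem \ref{Thm_2.8Generalization}, whose bound genuinely scales with the degree). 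Either way the exponent $1+\epsilon$ (resp. $2+\epsilon$ for cyclic subgroups) is reached, with the $\epsilon$ serving only to absorb the finitely many constants; one uses $[\Q(C):\Q]\geq \sqrt{[\Q(P):\Q]}/\varphi(n)^{?}$-type comparisons only if needed to convert between the two.

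\medskip

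The main obstacle, and the step I would spend the most care on, is the claim $k\leq 1$: that at most one prime $\ell>37$ can have mod-$\ell$ image equal to $C_{ns}^+(\ell)$ for a given non-CM $E/\Q$. If a completely uniform such bound is not available off the shelf, the fallback is to allow $k$ to be bounded by an \emph{absolute} constant $k_0$ (any non-CM $E/\Q$ has only boundedly many non-surjective large primes, by the uniformity results cited in \S\ref{SubSect_GalReps}), which still makes $24^{-k}\geq 24^{-k_0}$ absolute and the argument goes through unchanged. Everything else — the convergence of $\prod(1-\ell^{-2})$, the degree inequalities $\deg(x)\leq[\Q(P):\Q]$ and $\deg(y)\leq[\Q(C):\Q]$, and the bookkeeping to absorb constants into $c_\epsilon$ — is routine.
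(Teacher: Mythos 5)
Your overall architecture --- splitting $n$ into its part supported on primes $\ell\le 37$ and its part supported on primes $\ell>37$, handling the first via Theorem \ref{Thm_2.8Generalization} with $d_0=1$ and the second via Proposition \ref{Prop_IsogenyAndTorsionOrderBounds} --- is exactly the paper's strategy, and your side worry about the small factor is unfounded: the paper does not bound $n_{\mathrm{small}}$ by an absolute constant, it bounds it by $c\cdot[\Q(P):\Q]^{1/2}$, and combining this with $n_{\mathrm{big}}\ll[\Q(P):\Q]^{1/2+\epsilon}$ is precisely how the stated exponent $1+\epsilon$ (resp.\ $2+\epsilon$) arises. The genuine gap is your treatment of the factor $24^{-k}$. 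Your key claim --- that at most one prime $\ell>37$ (or, in your fallback, at most an absolutely bounded number $k_0$ of primes, uniformly over all non-CM $E/\Q$) can have $\rho_{E,\ell}(G_\Q)=C_{ns}^+(\ell)$ --- is not a known theorem. Serre's open image theorem gives, for each fixed non-CM $E/\Q$, finitely many non-surjective primes, but the threshold $\ell_0(E/\Q)$ depends on $E$; no curve-independent bound on the number of exceptional primes $>37$ is currently available, and Theorem \ref{ThmLargePrimeImage} only constrains what the exceptional image can be, not how many primes can be exceptional. Indeed, a uniform bound of the kind you invoke is essentially the open uniformity question recalled in \S\ref{SubSect_GalReps}. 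With $k$ a priori unbounded in terms of absolute constants, $24^{-k}$ cannot be absorbed into $c_\epsilon$, and your argument stalls at its central step. (A telltale sign: your route would yield the sharper bounds with no $\epsilon$ at all, which is stronger than what the paper proves.)

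The paper closes this gap without any uniformity input by bounding $k$ in terms of the degree $d$ itself. Writing $B=\prod_{i=1}^{k}\ell_i^{b_i}$ and using the inequality $d>\tfrac12\prod_i \ell_i^{2b_i-1}(\ell_i-1)/24$, one fixes $Z$ with $\log_{Z/24}(24)<\epsilon$ and observes that at most $\log_{Z/24}(2d)$ indices can have $\ell_i^{2b_i-1}(\ell_i-1)>Z$ (otherwise the product already exceeds $d$), while all remaining indices have $\ell_i\le Z$, so $k\le \log_{Z/24}(2d)+\pi(Z)$ and hence $24^{k}\le 24^{\pi(Z)}(2d)^{\log_{Z/24}(24)}\ll_\epsilon d^{\epsilon}$. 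This counting is exactly where the $\epsilon$ in the exponents comes from; substituting it for your ``$k\le 1$'' claim (and then converting from the point bound to the cyclic-subgroup bound via $[\Q(P_B):\Q(C_B)]\le\varphi(B)$ together with $\varphi(B)\gg_\epsilon B^{1-\epsilon}$, as the paper does) repairs the argument.
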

\begin{proof}
Assume that $n>1$; let us write
\[
n=A\cdot B,
\]
where $A\geq 1$ is divisible only by primes $\ell\leq 37$, and $B:=\prod_{i=1}^k \ell_i^{b_i}$ where each prime $\ell_i>37$. 

We will prove the torsion point order bound first. 
Let $P_A:=BP$ and $P_B:=AP$. By Theorem \ref{Thm_2.8Generalization}, we have 
\begin{equation}\label{Eqn_TorsionBoundOnUniformlySupportedPart}
A\leq c\cdot [\Q(P_A):\Q]^{1/2}
\end{equation}
for some absolute constant $c\in\Z^+$. Thus, it will suffice to bound $B$ in terms of $d:=[\Q(P_B):\Q]$. We will do this with an analysis similar to that in \cite[\S 3.2]{CP18}. 

Assume that $B>1$. Then by Proposition \ref{Prop_IsogenyAndTorsionOrderBounds}, we have
\begin{equation}
\label{Eqn_WeakerBoundOnTorsionPointOrder}
d>\frac{1}{2}\cdot \frac{1}{24^k}\cdot B\varphi(B)=\frac{1}{2}\prod_{i=1}^k \frac{\ell_i^{2b_i-1}(\ell_i-1)}{24}.
\end{equation}
Fix $0<\epsilon<\frac{1}{2}$, and choose any real number $Z>0$ for which $\log_{Z/24}(24)<\epsilon$. 
We will partition the prime power divisors of $B$ in the following way. First, we claim there are at most $\log_{Z/24}(2d)$ indices $i$ with $Z< \ell_i^{2b_i-1}(\ell_i-1)$. If this were not the case, then we would have by \eqref{Eqn_WeakerBoundOnTorsionPointOrder} that
\begin{align*}
d&> \frac{1}{2}\cdot \prod_{i=1}^k \frac{\ell_i^{2b_i-1}(\ell_i-1)}{24}\\
&\geq \frac{1}{2}\cdot \prod_{\ell_i^{2b_i-1}(\ell_i-1)> Z}\frac{\ell_i^{2b_i-1}(\ell_i-1)}{24}\\
&\geq \frac{1}{2}\cdot \prod_{\ell_i^{2b_i-1}(\ell_i-1)> Z}\frac{Z}{24}\\
&\geq \frac{1}{2}\cdot \left(\frac{Z}{24}\right)^{\log_{Z/24}(2d)}\\
&= d,
\end{align*}
which is absurd. On the other hand, for any index $i$ with $Z\geq \ell_i^{2b_i-1}(\ell_i-1)$, one also has $\ell_i\leq  Z$. We conclude that the number $k$ of distinct prime factors of $B$ satisfies
\begin{equation}\label{Eqn_BoundOnNumberOfPrimeFactors}
k\leq \log_{Z/24}(2d)+\pi(Z),
\end{equation}
where $\pi\colon \Z^+\rightarrow \Z^+$ denotes the prime counting function. 

Using inequalities \eqref{Eqn_WeakerBoundOnTorsionPointOrder} and \eqref{Eqn_BoundOnNumberOfPrimeFactors}, we make the following calculations:
\begin{align*}
B\varphi(B)&<2d\cdot 24^k&&\textrm{By Equation }\eqref{Eqn_WeakerBoundOnTorsionPointOrder}\\
&\leq 2d\cdot 24^{\log_{Z/24}(2d)+\pi(Z)}&&\textrm{By Equation \eqref{Eqn_BoundOnNumberOfPrimeFactors}}\\
&=2d\cdot 24^{\log_{Z/24}(2d)}\cdot 24^{\pi(Z)}\\
&=2d\cdot (2d)^{\log_{Z/24}(24)}\cdot 24^{\pi(Z)}&&\textrm{By }a^{\log_x(b)}=b^{\log_x(a)}\\
&=2^{1+\log_{Z/24}(24)}24^{\pi(Z)}\cdot d^{1+\log_{Z/24}(24)}\\
&<2^{1+\log_{Z/24}(24)}24^{\pi(Z)}\cdot d^{1+\epsilon}&&\textrm{By our initial assumption that }\log_{Z/24}(24)<\epsilon.
\end{align*}
Therefore, we deduce that 
\begin{equation}\label{Eqn_BetterBoundOnTorsionPointOrder}
B\varphi(B)<c_{1,\epsilon}\cdot d^{1+\epsilon},
\end{equation}
where $c_{1,\epsilon}:=2^{1+\log_{Z/24}(24)}24^{\pi(Z)}$ depends only on $\epsilon$ since $Z$ depends only on $\epsilon$. 

Next, by \cite[Theorem 327]{HW08} there exists a constant $0<b_\epsilon<1$ such that for all $n\in\Z^+$ one has 
\[
b_\epsilon\cdot n^{1-\epsilon}<\varphi(n).
\]
Combining this with \eqref{Eqn_BetterBoundOnTorsionPointOrder}, we deduce that
\[
B^{2-\epsilon}<b_\epsilon ^{-1} c_{1,\epsilon}\cdot d^{1+\epsilon},
\]
which implies that
\[
B<(b_\epsilon^{-1}c_{1,\epsilon})^{1/(2-\epsilon)}\cdot d^{(1+\epsilon)/(2-\epsilon)}.
\]
Since $\epsilon<\frac{1}{2}$, we conclude that
\begin{equation}\label{Eqn_BoundOnTorsionSupportedAwayFrom37}
B<c_\epsilon\cdot d^{1/2+\epsilon}
\end{equation}
where $c_\epsilon:=(b_\epsilon^{-1}c_{1,\epsilon})^{1/(2-\epsilon)}>0$.
Combining this with the bound for $A$ in \eqref{Eqn_TorsionBoundOnUniformlySupportedPart} finishes the proof for the torsion point order bound.

Next, we will prove the analogous upper bound for the order $n$ of the subgroup $C:=\langle P\rangle$. We define $C_A:=\langle P_A\rangle$ and $C_B:=\langle P_B\rangle$ to be the unique subgroups of orders $A$ and $B$, respectively. By Theorem \ref{Thm_2.8Generalization}, we have
\begin{equation}\label{Eqn_IsogenyBoundOnUniformlySupportedPart}
A\leq c\cdot [\Q(C_A):\Q]
\end{equation}
for some absolute constant $c\in\Z^+$. 

Fix $0<\epsilon<\frac{1}{10}$. Then we know by 
\eqref{Eqn_BoundOnTorsionSupportedAwayFrom37} that
\[
B<c_\epsilon\cdot [\Q(P_B):\Q]^{1/2+\epsilon}
\]
for some $c_\epsilon>0$. 
Let us make some observations about this inequality. We will set $d:=[\Q(C_B):\Q]$:
\begin{align*}
B&<c_\epsilon\cdot [\Q(P_B):\Q]^{1/2+\epsilon}\\
&=c_\epsilon\cdot [\Q(P_B):\Q(C_B)]^{1/2+\epsilon}\cdot d^{1/2+\epsilon}\\
&\leq c_\epsilon\cdot \varphi(B)^{1/2+\epsilon}\cdot d^{1/2+\epsilon}.
\end{align*}
We thus have
\[
\frac{B}{\varphi(B)^{1/2+\epsilon}}<c_\epsilon\cdot d^{1/2+\epsilon}.
\]
Since $\varphi(B)\leq B$, we get $\frac{1}{B^{1/2+\epsilon}}\leq\frac{1}{\varphi(B)^{1/2+\epsilon}}$, which means we have the inequality
\[
B^{1/2-\epsilon}<c_\epsilon\cdot d^{1/2+\epsilon}.
\]
Thus,
\[
B<c_{\epsilon}^{1/(1/2-\epsilon)}\cdot d^{\frac{1/2+\epsilon}{1/2-\epsilon}}.
\]
From $\epsilon<\frac{1}{10}$ we have $\frac{1/2+\epsilon}{1/2-\epsilon}<1+5\epsilon$, and so our inequality becomes
\[
B<c_\epsilon'\cdot d^{1+5\epsilon}
\]
where $c_\epsilon':=c_{\epsilon}^{1/(1/2-\epsilon)}$. Adjusting $\epsilon$ as necessary,
we can assume we have the bound
\[
B<c_{\epsilon}'\cdot d^{1+\epsilon}.
\]
Combining this bound with \eqref{Eqn_IsogenyBoundOnUniformlySupportedPart} proves our cyclic subgroup order bound.
\end{proof}
Here is Theorem \ref{Thm_Rational_j}, stated as a corollary of Theorem \ref{Thm_IsogenyAndTorsionOrderBoundsUpToEpsilon}.
\begin{corollary}\label{Theorem2Cor}
For each $\epsilon>0$, there exists a constant $c_\epsilon>0$ such that for any elliptic curve $E/F$ with $j(E) \in \Q$, we have
\[
\exp E(F)[\emph{tors}]\leq c_\epsilon\cdot [F:\Q]^{1+\epsilon},
\]
as well as
\[
\#  E(F)[\emph{tors}]\leq c_\epsilon\cdot [F:\Q]^{3/2+\epsilon}.
\]
\end{corollary}

\begin{proof}
    By results in the CM case due to Clark and Pollack \cite[Theorem 1]{CP15}, we may assume $E$ is non-CM. A quick twisting argument shows that the bounds on $\exp E(F)[\tors]$ follow from Theorem \ref{Thm_IsogenyAndTorsionOrderBoundsUpToEpsilon}, so it suffices to prove the stated bound on $\#E(F)[\tors]$. 
    
   Suppose $E(F)[\tors] \cong \Z/m\Z \times \Z/n\Z$ for $m \mid n$. Let $d \coloneqq [F:\Q]$. Let $m=m_1m_2$ and $n=n_1n_2$, where $m_1, n_1$ are supported on primes $\ell \leq 37$ and $m_2,n_2$ are supported on primes $\ell >37$. By Theorem \ref{Thm_2.8Generalization}, we have $m_1n_1 \leq c \cdot d^{1/2}$ for some absolute constant $c>0$. Now, let $\epsilon>0$. Then Theorem \ref{Thm_IsogenyAndTorsionOrderBoundsUpToEpsilon} shows that $n_2<c_{1,\epsilon} \cdot d^{1/2+\epsilon}$ for some constant $c_{1,\epsilon}>0$. Since $m_2n_2 \mid n_2^2$, we have
\[
m_2n_2 < c_{1,\epsilon}^2 \cdot d^{1+\epsilon}
\]
(adjusting $\epsilon$ as necessary).
Putting this together gives
\[
mn=m_1n_1m_2n_2< c_{\epsilon}\cdot d^{3/2+\epsilon}
\]
where $c_\epsilon:=c\cdot c_{1,\epsilon}^2$.
\end{proof}
\section{The Proof of Theorem \ref{Thm_UniformPolynomialBoundsOnRationalGeometricIsogenyClasses}}
Let $E/F$ be an elliptic curve in $\cI_\Q$, and set $d\coloneqq [F:\Q]$. If $E$ has CM, then by \cite[Theorem 1]{CP15} we have $\# E(F)[\tors]\leq c\cdot d\log\log d$ for some absolute, effectively computable constant $c>0$ when $d\geq 3$. Thus, we may assume that $E$ is non-CM. Then by \cite[Lemma A.4]{CN21}, there exists an elliptic curve $E'/\Q$ and an extension $K/F$ of degree at most $2$ for which $E$ and $E'$ are $K$-rationally isogenous. Let us write $\phi\colon E\rightarrow E'$ for such an isogeny; we may assume that $\phi$ is cyclic. 

We will first prove polynomial bounds on $N:=\exp E(F)[\tors]$. Let us write
\[
N=Mm
\]
for some $M,m\in \Z^+$, where for primes $\ell\in \Z^+$ one has $\ell\mid M$ if and only if $\ell\mid N$ and $\ell\leq 37$.
By Theorem \ref{Thm_2.8Generalization},
there exists an absolute constant $c_1\in \Z^+$ for which 
\begin{equation}\label{Eqn_Exponent_UniformlySupportedPart}
M\leq c_1\cdot [F:\Q]^{1/2}.
\end{equation}
Thus, it suffices to polynomially bound $m$.

Let $P \in E(K)$ be a point of order $m$. Then $\phi(P)\in E'(K)$ has order $m_1$ for some $m_1 \mid m$. Let us write $m=m_1m_2$. Fixing an $\epsilon>0$, by Theorem \ref{Thm_IsogenyAndTorsionOrderBoundsUpToEpsilon} we have
\begin{equation}\label{Eqn_Exponent_AwayFrom37_Squarefree_CoprimeToIsogenyDegree}
m_1< c_{1,\epsilon}\cdot [K:\Q]^{1/2+\epsilon}
\end{equation}
for some constant $c_{1,\epsilon}>0$ which depends only on $\epsilon$. Next, since 
\[
\mathcal{O}=m_1\phi(P)=\phi(m_1P),
\]
it follows that $m_1P \in \ker(\phi)$. Thus $m_2 \mid \#\ker(\phi)=\#\ker(\widehat{\phi})$, where $\widehat{\phi}\colon E' \rightarrow E$ denotes the dual isogeny. Since $\widehat{\phi}$ is also cyclic and $K$-rational, $\ker(\widehat{\phi})$ contains a unique $K$-rational cyclic subgroup of order $m_2$. This implies by Theorem \ref{Thm_IsogenyAndTorsionOrderBoundsUpToEpsilon} that
\begin{equation}\label{Eqn_Exponent_AwayFrom37_Squarefree_DividesIsogenyDegree}
m_2<c_{2,\epsilon}\cdot [K:\Q]^{1+\epsilon}
\end{equation}
for some constant $c_{2,\epsilon}$ which depends only on $\epsilon$. 

We combine the bounds given in \eqref{Eqn_Exponent_UniformlySupportedPart}, \eqref{Eqn_Exponent_AwayFrom37_Squarefree_CoprimeToIsogenyDegree} and \eqref{Eqn_Exponent_AwayFrom37_Squarefree_DividesIsogenyDegree}, and conclude that (noting $[K:\Q]\mid 2[F:\Q])$ for each $\epsilon>0$, there exists a constant $c_\epsilon>0$, which depends only on $\epsilon$, such that
\begin{equation}\label{Eqn_GroupExponentBound}
\exp E(F)[\tors]<c_\epsilon\cdot [F:\Q]^{2+\epsilon}.
\end{equation}
This proves our group exponent bound. 

Next we will prove our polynomial bound on $\#E(F)[\tors]$, following \cite[$\S3.3$]{CP18}. Writing $N:=\exp E(F)[\tors]$, we have $E(F)[\tors]\cong \Z/d\Z\times\Z/N\Z$ for some $d\mid N$ where $d>0$. Since $E$ has full $d$-torsion over $F$, we find that $F$ contains a primitive $d$'th root of unity by the Weil pairing \cite[Corollary III.8.1.1]{Sil09}, and so $\varphi(d)\mid [F:\Q]$. By \cite[Theorems 327 and 328]{HW08}, this implies that $d\leq b_\epsilon\cdot [F:\Q]^{1+\epsilon}$ for some $b_\epsilon>0$. 
Since $\# E(F)[\tors]=dN$, we combine this with \eqref{Eqn_GroupExponentBound} and conclude that
\[
\# E(F)[\tors]<C_{\epsilon}\cdot [F:\Q]^{3+\epsilon}
\]
where $C_\epsilon:=b_\epsilon\cdot c_\epsilon$.
\bibliographystyle{alpha}
\bibliography{bibfile}

\end{document}